\newtheorem{theorem}{Theorem}[section]
\newtheorem{lemma}[theorem]{Lemma}
\newtheorem{corollary}[theorem]{Corollary}
\newtheorem{proposition}[theorem]{Proposition}
\theoremstyle{definition}
\newtheorem{definition}[theorem]{Definition}
\newtheorem{question}[theorem]{Question}
\theoremstyle{remark}
\newcommand{\R}{\mathbb{R}}
\numberwithin{equation}{section}
\begin{document}
\title{Extension of differentiable local mappings on linear topological spaces}


\author[Genrich Belitskii]{Genrich Belitskii}
\email{genrich@cs.bgu.ac.il}

\author[Victoria Rayskin]{Victoria Rayskin}
\email{vrayskin@gmail.com}

\providecommand{\keywords}[1]{\textbf{\textit{Index terms---}} #1}


\date{}
\maketitle
\begin{abstract}
Usually, for extension of local maps, one uses multiplication by so called bump functions. However, majority of infinite-dimensional linear topological spaces do not have
smooth bump functions. Therefore, in \cite{BR} we suggested a new approach for Banach spaces, based on the composition with locally identical maps. In the present work we discuss a possibility of generalization of this method for arbitrary spaces  and applications of this theory.
\keywords{Bump functions \and Blid maps \and Topological spaces \and Map extension \and Linearization}
\end{abstract}

\section{Introduction}
Let $X$ and $Y$ be linear topological spaces. In this work, we will discuss the differentiable local maps $f: X\to Y$ and the possibility of differentiable extension of the maps. The map's extension is usually not unique and can be studied in the context of the equivalence class of $f$, i.e. a germ $[f]$.  Recall that a germ $[f]$ at $x\in X$  is the equivalence class of local maps, such that any pair of the class members coincides on some neighborhood of $x$. Each element of the class is called a representative of a germ. Occasionally, we denote germ $[f]$ as $f$. In the future, without loss of generality, we will assume that $x=0$.
We are interested in the question of existence of a global representative of the germ.
\\
Recall that $f: U\subset X \to Y$ is differentiable at  a point $x\in U$ if
$$
f(x+h) = f(x) + A\cdot h +r(h)
$$
with $r(h)= o(h)$ and with the continuous operator $A: X\to Y$.
\\
Each meaning of ``smallness'' $o(h)$ defines a corresponding specific notion of $r(h)$ and consequently a specific notion of differentiability. The value of the derivative is $A$ (usually denoted by $f'(x)$) and it is independent of the definition of $r(h)$.
Various definitions of differentiability are discussed in the works of 
F. and R. Nevanlinna (\cite{N}), 
H. R. Fischer (\cite{F}),
H. H. Keller (\cite{K}),
E. Вinz (\cite{B}), 
E. Binz, H. H. Keller (\cite{BK} ) and 
E. Binz, W. Meier - Solfrian (\cite{BM-S}). The main ideas of differentiation in abstract spaces were developed and clarified in the sequence of works by Averbuh, V. I. and Smoljanov, O. G (\cite{AS}), and later by M. Schechter in \cite{S}. We remind all these definitions and connections between them in Section~\ref{section-definitions}.
Thus, the question investigated in this work is the following.
\begin{question}
Let $f$ be a germ at $0$, differentiable in some a priori specified meaning. Does there exist a global differentiable (in the same sense) representative of the germ?
\end{question}

The classical method of a smooth extension of a locally defined map $f$ is multiplication by the smooth bump function. 
Recall that a smooth (differentiable) bump function is a real-valued function ranging between $0$ and $1$ with bounded support, which is equal to $1$ in a neighborhood $0$. 
Because it is known that very few spaces have smooth bump functions, we introduce\footnote{Earlier this idea was introduced in \cite{BR} for Banach spaces and for  differentiability in Fr\'echet sense. } here the new method based on the idea of considering the composition of $f$ with a locally identical map. Because we use composition in this construction, we will discuss only those notions of differentiability that satisfy the Chain Rule of differentiation.

In Section~\ref{section-main-prop} we show how locally defined maps with the help of our new method can be extended to the entire space. We present examples of the extension constructed for the specific Banach spaces (Section~\ref{section-banach}) and for the specific non-Banach linear topological spaces (Section~\ref{section-metric}). 
\\
In Section~\ref{section-applications} we discuss applications of this theory to the problem of conjugation with linear map.  We show that for the construction of differentiable conjugation, the assumption of the existence of smooth bump function is not necessary, and consequently the corresponding conjecture stated in the paper of W. Zhang, K. Lu and W. Zhang "Differentiability of the Conjugacy in the Hartman-Grobman Theorem" (\cite{ZLZ}) is incorrect.

\section{Background Definitions}\label{section-definitions}
Let us recall three definitions  of differentiation on linear topological spaces, which we use in this work. The reader can also find these definitions in~\cite{S}
\begin{itemize}
\item[1.] Bounded differentiability
\begin{definition}
 The map $f :X \to Y$ is bounded-differentiable at $x\in X$, if for every bounded subset $S\subset X$ and every $h\in S$ and $t\in \R$ 
$$r(th)/t \to 0$$ uniformly in $h$ as $t\to 0$.
\end{definition}

\item[2.] Compact (Hadamard) differentiability
\begin{definition}
The map $f:X \to Y$ is compact (Hadamard) differentiable at $x\in X$, if
     $$ f(x+t_n h_n) -f(x)=t_n A h +o(t_n)$$
as $t_n \to 0$, and $h_n \to h$.
\end{definition}

\item[3.] If both $X$ and $Y$ are Banach spaces with the norms $||.||_1$ and $||.||_2$ respectively, then Fr\'echet differentiation is well defined. 
\begin{definition}
The map $f$ is Fr\'echet differentiable at $0$ if
       $$ \lim_{h\to 0}||r(h)||_2/||h||_1 =0.$$
\end{definition}
\end{itemize}
Thus, as discussed in \cite{AS}, bounded differentiability implies the compact one.  The latter, in turn, is equivalent to Hadamard differentiability (see \cite{S}). If both $X$ and $Y$ are Banach then bounded and Fr\'echet differentiability coincide. For these types of differentiability many important rules, including the Chain Rule, hold.

The compact (Hadamard) differentiability is the weakest for which the Chain Rule is satisfied (for instance,  for the  G\^ateaux derivative the Chain Rule does not hold).



\section{The Main Proposition}\label{section-main-prop}

As explained above, we need to define differentiability, which satisfies the Chain Rule. Thus, we assume that the Cain Rule holds for the next definition and for the later discussion. 

In our work \cite{BR} we defined the notion of the blid map for Banach spaces, which stands for {\bf B}ounded {\bf L}ocal {\bf Id}entity map. In this paper we generalize this idea for linear topological spaces.

\begin{definition}
A space $X$ satisfies blid-differentiable property if for every neighborhood $U\subset X$ of $0$ there is a differentiable map $H$ defined on $X$, locally coinciding with the identity map, such that $H(X)\subset U$.
\end{definition}
Let us recall that a neighborhoods base of zero is a system $B=\{V_\alpha\}$ of neighborhoods of 0, such that for any neighborhood $U\subset X$ of 0 there exists some $V_\beta \in
 B$, $V_\beta \subset U$.

     Therefore, if there is a neighborhoods base $B$ such that for every $V_\alpha$ from $B$ there exists local identity $H_\alpha$, $H_\alpha(X) \subset  V_\alpha$, then $X$ satisfies the blid-property.
\begin{proposition}[The Main Proposition]\label{thm-main}
If $X$ satisfies blid-differentiable property, then every differentiable germ $[f]:X\to Y$ has a global differentiable representative.
\end{proposition}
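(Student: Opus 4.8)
The plan is to realize the extension by \emph{precomposition} with a blid map, so that the Chain Rule does all the work. Concretely, I would first fix a representative $f\colon U\to Y$ of the germ, where $U\subset X$ is a neighborhood of $0$ on which $f$ is differentiable. Since $X$ satisfies the blid-differentiable property, applied to this neighborhood $U$, there is a differentiable map $H\colon X\to X$ with $H(X)\subset U$ that coincides with the identity on some neighborhood $V\subset U$ of $0$. The candidate global representative is then
$$
F \;=\; f\circ H\colon X\longrightarrow Y.
$$

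Three things then have to be checked. First, $F$ is defined on all of $X$: this is immediate, because $H(X)\subset U$ and $f$ is defined on all of $U$, so the composition makes sense at every point of $X$. Second, $F$ is a representative of the germ $[f]$: for $x\in V$ we have $H(x)=x$, hence $F(x)=f(H(x))=f(x)$, so $F$ and $f$ agree on the neighborhood $V$ of $0$, which is exactly what it means for $F$ to lie in the equivalence class $[f]$; in particular $F'(0)=f'(0)$. Third, and this is the only substantive point, $F$ must be differentiable in the prescribed sense at every point of $X$.

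For the third point I would invoke the Chain Rule, which is available precisely because we have restricted attention to the notions of differentiability (compact/Hadamard, or bounded) for which it holds. For an arbitrary $x\in X$, the map $H$ is differentiable at $x$ and sends $x$ into $U$, where $f$ is differentiable at $H(x)$; hence $f\circ H$ is differentiable at $x$, with $F'(x)=f'(H(x))\circ H'(x)$. Running this over all $x\in X$ yields global differentiability of $F$, which completes the construction.

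The main obstacle — and the reason the hypothesis is stated exactly this way — is contained entirely in this last step: one must know that the composition of two maps differentiable in a given sense is again differentiable in that same sense. This is why the statement is deliberately confined to differentiability satisfying the Chain Rule (it would fail, for instance, for the G\^ateaux notion). Everything else is formal bookkeeping: the blid property is tailored so that a single $H$ simultaneously (i) shrinks the whole space into the domain of the chosen representative and (ii) is invisible on a neighborhood of $0$, so that precomposition neither destroys global definition nor perturbs the germ.
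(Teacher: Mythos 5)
Your proposal is correct and is essentially the paper's own proof: you form $F = f\circ H$ with $H$ the blid map supplied by the hypothesis applied to the domain $U$ of a representative, and use the Chain Rule (valid for the notions of differentiability under consideration) to conclude global differentiability. The paper states this in one line; you have merely made explicit the three routine verifications (global definition, agreement with $f$ near $0$, and differentiability of the composition) that the paper leaves implicit.
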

\begin{proof}
 Let $f$ be a local representative of the germ defined on a neighborhood $U\subset X$ of zero. Let $H:X \to X$ be a differentiable local identity map such that

$H(X)\subset U$. Then the  map 
$$
                   F(x)=f(H(x)), \ \ x\in X
$$
is a global representative of the germ as we need. 
\end{proof}

\section{Banach Spaces}\label{section-banach}
In this section we will consider a Banach space $X$ and a general linear topological space $Y$. In  \cite{BR} we introduced the following definition:
\begin{definition}\label{def-blid map} A  differentiable {\it blid map} for a space $X$ is a global {\bf B}ounded {\bf L}ocal {\bf Id}entity differentiable  map $H:X \to X$.
\end{definition}

\begin{lemma}\label{lemma-blid-space}
If there exists differentiable blid map, then $X$ satisfies differentiable blid-property. 
\end{lemma}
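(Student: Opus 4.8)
The plan is to exploit the linear structure of the Banach space $X$ and turn the single given blid map into a whole family of locally-identity maps with images shrinking to $0$, which is exactly the rescaling trick familiar from bump functions, adapted here to compositions. Since $X$ is normed, the balls $B_\epsilon = \{x\in X : \|x\| < \epsilon\}$ form a neighborhoods base of $0$; by the remark preceding Proposition~\ref{thm-main}, it therefore suffices to produce, for each $\epsilon>0$, a differentiable map on $X$ coinciding with the identity near $0$ and with image contained in $B_\epsilon$.

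First I would record the two defining features of the blid map $H$ from Definition~\ref{def-blid map}: boundedness of its image gives some $R>0$ with $H(X)\subset B_R$, and local coincidence with the identity gives some $\delta>0$ with $H(x)=x$ whenever $\|x\|<\delta$. Then, for a scaling parameter $\lambda>0$, I would introduce the rescaled map $H_\lambda(x)=\lambda\,H(x/\lambda)$ and verify its three required properties. Differentiability follows because $H_\lambda$ is the composition of the continuous linear map $x\mapsto x/\lambda$, the differentiable map $H$, and the continuous linear map $y\mapsto\lambda y$; the Chain Rule, assumed to hold throughout this section, then yields differentiability of $H_\lambda$ in the same sense. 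The local-identity property holds on $B_{\lambda\delta}$, since $\|x\|<\lambda\delta$ forces $\|x/\lambda\|<\delta$, hence $H(x/\lambda)=x/\lambda$ and $H_\lambda(x)=x$. Finally, boundedness gives $\|H_\lambda(x)\|=\lambda\,\|H(x/\lambda)\|\le \lambda R$, so $H_\lambda(X)\subset B_{\lambda R}$.

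To conclude, given an arbitrary $\epsilon>0$ I would simply choose $\lambda<\epsilon/R$, so that $H_\lambda(X)\subset B_{\lambda R}\subset B_\epsilon$, producing the map required for the ball $B_\epsilon$. Running over all $\epsilon>0$ supplies a local identity map for every member of the ball base, and the remark then upgrades this to the full blid-differentiable property. I do not expect a genuine obstacle here: the only point deserving care is that rescaling by a continuous linear isomorphism must preserve the chosen notion of differentiability, which is precisely what the Chain Rule hypothesis guarantees, and that the identity region scales as $B_{\lambda\delta}$ rather than collapsing to a point.
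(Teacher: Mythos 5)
Your proposal is correct and uses precisely the paper's argument: the paper defines $H_c(x)=\frac{c}{N}H\left(\frac{N}{c}x\right)$ for a blid map $H$ with $\|H(x)\|<N$, which is your $H_\lambda$ with $\lambda=c/N$, and likewise works with the ball base $B_c$. Your write-up merely makes explicit the details the paper leaves implicit (differentiability via the Chain Rule, the scaled identity region $B_{\lambda\delta}$, and the final choice of scaling parameter).
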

\begin{proof}
It is convenient to chose the balls $B_c=\{x\in X: ||x||<c\}$ to be the base of neighborhoods in the Banach space $X$. Let $H$ be a blid map, such that $||H(x)||<N$. Then $$H_c(x)=\frac{c}{N} H(\frac{N}{c} x)$$ is the blid map as well and its image is inside of $B_c$. 
\end{proof}
The next statement immediately follows from the Lemma~\ref{lemma-blid-space} and the Main Proposition~\ref{thm-main}.
\begin{corollary}
 If the space $X$ admits differentiable blid map, then every differentiable germ at $0\in X$ into $Y$ has a global differentiable representative. 
\end{corollary}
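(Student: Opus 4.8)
The plan is to deduce the statement directly by chaining the two results already established, namely Lemma~\ref{lemma-blid-space} and the Main Proposition~\ref{thm-main}; no new machinery is required. First I would invoke Lemma~\ref{lemma-blid-space}: the hypothesis supplies a differentiable blid map $H:X\to X$, and the Lemma upgrades this single map to the full differentiable blid-property, i.e.\ it produces, for every neighborhood of $0$, a differentiable locally-identical map whose image lands inside that neighborhood. Concretely, if $\|H(x)\|<N$ for all $x$, the rescaled maps $H_c(x)=\frac{c}{N}H(\frac{N}{c}x)$ furnish the required family indexed by the ball base $\{B_c\}$, and each $H_c$ satisfies $H_c(X)\subset B_c$.

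Second, with the differentiable blid-property now in hand, I would apply the Main Proposition~\ref{thm-main} verbatim. Given a differentiable germ $[f]$ represented by a map $f$ on a neighborhood $U\ni 0$, I would choose $c$ small enough that $B_c\subset U$ — possible precisely because $\{B_c\}$ is a neighborhood base of $0$ — take the corresponding $H_c$ (so that $H_c(X)\subset B_c\subset U$), and set $F(x)=f(H_c(x))$ for all $x\in X$. Since $H_c$ coincides with the identity on a neighborhood of $0$, the map $F$ coincides with $f$ there and hence represents the same germ; and since $F$ is a composition of differentiable maps, the Chain Rule (assumed throughout for the chosen notion of differentiability) guarantees that $F$ is differentiable on all of $X$. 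Thus $F$ is the sought global representative.

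Because the conclusion is a formal consequence of two prior results, there is no genuine obstacle in the chaining itself; the only points demanding care are bookkeeping. One must check that the rescaling in Lemma~\ref{lemma-blid-space} genuinely preserves both the differentiability and the local-identity character of $H$ (the dilations $x\mapsto\frac{N}{c}x$ and $x\mapsto\frac{c}{N}x$ are differentiable and interact correctly with the Chain Rule, while conjugating the identity by a dilation is again the identity on a neighborhood of $0$), and that the cut-off radius $c$ can always be taken small enough to fit inside the domain $U$ of the given germ. Everything else is immediate, so I expect the proof to amount to little more than citing Lemma~\ref{lemma-blid-space} followed by the Main Proposition~\ref{thm-main}.
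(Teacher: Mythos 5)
Your proof is correct and matches the paper exactly: the paper states this corollary ``immediately follows from Lemma~\ref{lemma-blid-space} and the Main Proposition~\ref{thm-main},'' which is precisely your chaining of the rescaled blid maps $H_c$ with the composition $F = f\circ H_c$. The extra bookkeeping you note (choosing $c$ with $B_c\subset U$, checking dilations preserve differentiability and the local-identity property) is sound and only makes explicit what the paper leaves implicit.
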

Note, if $Y$ is a Banach space as well, then this result is proved  in \cite{BR} for differentiability in Fr\'echet sense.
\begin{corollary}
If the space $X$ has a differentiable bump function $h: X\to \R$, then it satisfies differentiable blid-property, and every differentiable germ at $0\in X$ has a global differentiable representative.
\end{corollary}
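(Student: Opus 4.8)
The plan is to manufacture a concrete differentiable blid map directly out of the bump function and then let the machinery already in place finish the argument. Since $X$ is a Banach space, once I exhibit a differentiable blid map $H:X\to X$ in the sense of Definition~\ref{def-blid map}, Lemma~\ref{lemma-blid-space} gives the differentiable blid-property, and the Main Proposition~\ref{thm-main} (equivalently, the preceding corollary on blid maps) immediately yields a global differentiable representative of every germ. So the whole content of the statement is the construction of one good $H$.

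The candidate I would write down is the pointwise scalar multiple $H(x)=h(x)\,x$. First I would check the three defining features of a blid map. It is \emph{global}, being defined on all of $X$. It is a \emph{local identity}: since $h\equiv 1$ on some neighborhood $W$ of $0$, we have $H(x)=x$ for every $x\in W$. It is \emph{bounded}: because the bump function has bounded support, there is an $R$ with $\|x\|\le R$ whenever $h(x)\ne 0$, and then $\|H(x)\|=|h(x)|\,\|x\|\le R$ for all $x$, so $H(X)$ is a bounded set.

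The one step that requires genuine care is \emph{differentiability} of $H$, and this is where I expect the real work to sit. The map $H$ is the composition of the differentiable map $x\mapsto(h(x),x)\in\R\times X$ with the continuous bilinear scalar multiplication $(t,y)\mapsto t\,y$, so differentiability of $H$ amounts to the product rule for a scalar-valued differentiable function times the (linear, hence differentiable) identity. For the notions of differentiability under consideration, namely those obeying the Chain Rule, continuous bilinear maps are differentiable and the product rule holds, so $H$ inherits differentiability; this is precisely the point at which the standing assumption that the differentiability calculus is well behaved is actually used, whereas the boundedness and local-identity clauses are essentially formal.

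Finally, with $H(x)=h(x)\,x$ confirmed to be a differentiable blid map, I would invoke Lemma~\ref{lemma-blid-space} to pass to the differentiable blid-property, and then the Main Proposition~\ref{thm-main} to conclude that every differentiable germ at $0$ admits a global differentiable representative, which is exactly the assertion of the corollary. (If one prefers to bypass the blid-map shortcut and argue directly, the same $H$ can be rescaled as in Lemma~\ref{lemma-blid-space} so that its bounded image is absorbed into an arbitrary prescribed neighborhood of $0$ while the local-identity property is preserved, establishing the blid-property by hand.)
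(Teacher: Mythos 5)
Your proposal is correct and follows exactly the paper's own argument: the paper likewise defines $H(x)=h(x)\,x$ as the differentiable blid map and lets Lemma~\ref{lemma-blid-space} together with the Main Proposition finish the job. The additional details you supply (boundedness from the bounded support of $h$, differentiability via the product rule within a Chain-Rule-compatible calculus) are precisely the checks the paper leaves implicit.
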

\begin{proof}
The map
$$
H(x) = h(x)x
$$
is a differentiable blid map for $X$.
\end{proof}

\subsection{The Spaces of Bounded Continuous Functions}

In this section we will consider the space $X=C(T)$ of bounded continuous functions on a topological space $T$ with the norm $||.|| = \sup_T |.|$. 

In the space $X$ we will construct a blid map in the following way.
$$
H(x)(t)=h(x(t))x(t)
$$
where $h$ is a smooth bump function on the real line. This blid map is  Fr\'echet- (consequently bounded-, consequently compact-) differentiable.
\begin{corollary}\label{thm-c-0-blid}
Any bounded- (consequently compact-) differentiable germ at $0\in C(T)$ has a global (in the corresponding sense) differentiable representative.
\end{corollary}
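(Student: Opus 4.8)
The plan is to reduce the statement to the corollary established just above, namely that a space admitting a differentiable blid map has a global differentiable representative for every differentiable germ. Thus it suffices to verify that the operator
$$
H(x)(t) = h(x(t))\,x(t), \qquad x \in C(T),\ t \in T,
$$
exhibited before the statement is a genuine differentiable blid map on $C(T)$, where $h$ is a fixed smooth bump function on $\R$ with $h \equiv 1$ on some interval $(-\delta,\delta)$ and support in $[-R,R]$.

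First I would observe that $H$ is the Nemytskii (superposition) operator $H(x) = g \circ x$ attached to the single smooth real function $g(s) = s\,h(s)$, which has compact support contained in $[-R,R]$; in particular $g$ and all of its derivatives are bounded on $\R$. This yields at once the two defining features of a blid map. Since $g$ is continuous, $g \circ x$ is continuous whenever $x$ is, so $H$ maps $C(T)$ into itself, and $\|H(x)\| = \sup_t |g(x(t))| \le \sup_s |g(s)| < \infty$, so the image of $H$ is bounded. Moreover, if $\|x\| < \delta$ then $|x(t)| < \delta$ for every $t$, hence $h(x(t)) = 1$ and $H(x) = x$; thus $H$ coincides with the identity on the ball $\{\,\|x\| < \delta\,\}$.

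The substantive step, and the one I expect to be the main obstacle, is to establish differentiability of $H$ in the sup norm. My candidate derivative at a point $x_0 \in C(T)$ is the multiplication operator $(A\,v)(t) = g'(x_0(t))\,v(t)$, which is a bounded linear operator on $C(T)$ because $g'(x_0(\cdot))$ is a bounded continuous function on $T$. To control the remainder I would apply Taylor's formula to $g$ pointwise: for each $t$,
$$
g\big(x_0(t)+v(t)\big) - g\big(x_0(t)\big) - g'\big(x_0(t)\big)\,v(t) = \tfrac{1}{2}\,g''(\xi_t)\,v(t)^2
$$
for some $\xi_t$ between $x_0(t)$ and $x_0(t)+v(t)$. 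Taking suprema and using the uniform bound $|g''| \le M$ coming from the compact support of $g$ gives
$$
\big\| H(x_0+v) - H(x_0) - A\,v \big\| \le \tfrac{M}{2}\,\|v\|^2 = o(\|v\|),
$$
which is precisely Fr\'echet differentiability at $x_0$. The crucial point making this estimate uniform in $t$, and hence valid in the sup norm, is exactly the boundedness of $g''$, guaranteed because multiplying $h$ by the identity function preserves compact support.

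Finally, since Fr\'echet differentiability of $H$ implies bounded differentiability, which in turn implies compact differentiability (as recalled in Section~\ref{section-definitions}), the map $H$ is a differentiable blid map in each of these senses. Invoking the corollary that a space admitting a differentiable blid map has a global differentiable representative for every differentiable germ then yields the conclusion for $C(T)$ in the bounded sense, and, a fortiori, in the compact sense.
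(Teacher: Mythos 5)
Your proposal is correct and follows essentially the same route as the paper: you construct the very same blid map $H(x)(t)=h(x(t))\,x(t)$ on $C(T)$ and then invoke the corollary that a differentiable blid map yields global representatives of differentiable germs. The only difference is that you supply the Taylor-remainder argument proving Fr\'echet differentiability of the Nemytskii operator, a step the paper merely asserts, so your write-up is a more detailed version of the same proof.
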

\subsection{The Spaces of Smooth Functions}
In this section we will consider $X=C^q[0,1]$, $0< q < \infty$, equipped with the norm $||x||_q= \max_{0\leq j\leq q}\sup_{t\in[0,1]}|x^{(j)}(t)|$. Recall that on the real line all types of differentiability coincide, and consequently can be viewed as Fr\'echet differentiability. We will see that $X$ has a differentiable blid map, and consequently satisfies differentiable blid property.
\begin{lemma}\label{lemma-C-q}
The space $C^q[0,1]$ admits  Fr\'echet-differentiable blid map.
\end{lemma}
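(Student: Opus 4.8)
The plan is to avoid a direct pointwise construction, which fails here: the natural $C^0$-style candidate $H(x)(t)=\phi(x(t))$, with $\phi:\R\to\R$ smooth, bounded and equal to the identity near $0$, produces a bounded \emph{sup-norm} image but not a $C^q$-bounded one, since already $(\phi\circ x)'=\phi'(x)\,x'$ is unbounded on $C^q[0,1]$ (take functions with small values but large derivative). Instead I would reduce the problem to spaces for which a blid map is already available, exploiting the stability of the blid property under linear isomorphisms and finite products.

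First I would record the linear isomorphism
$$
\Phi:C^q[0,1]\longrightarrow \R^q\times C[0,1],\qquad \Phi(x)=\bigl(x(0),x'(0),\dots,x^{(q-1)}(0),\,x^{(q)}\bigr),
$$
whose inverse reconstructs $x$ from its top derivative $g=x^{(q)}\in C[0,1]$ and the initial jet by iterated integration, $x^{(q-1)}(t)=a_{q-1}+\int_0^t g$, and so on down to $x$. Both $\Phi$ and $\Phi^{-1}$ are bounded linear maps: the estimates $\sup|x^{(j)}|\le\|x\|_q$ in one direction and $\sup|x|\le|x(0)|+\sup|x'|$, applied iteratively, in the other, show the two norms are equivalent, so $\Phi$ is a Banach-space isomorphism.

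Next I would supply a blid map on each factor. On $\R^q$, which is finite-dimensional and hence carries a smooth bump function, the map $H_1(a)=h(a)\,a$ is a Fr\'echet-differentiable blid map, as in the bump-function corollary above. On $C[0,1]=C(T)$ with $T=[0,1]$, the map $H_2(g)(t)=h(g(t))\,g(t)$ constructed in the previous subsection is a Fr\'echet-differentiable blid map. I would then verify the routine fact that the product $H_1\times H_2$ is a Fr\'echet-differentiable blid map on $\R^q\times C[0,1]$: differentiability and the Chain Rule pass coordinatewise, a product of neighborhoods of $0$ is a neighborhood of $0$ so $H_1\times H_2$ is the identity near the origin, and a product of bounded sets is bounded.

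Finally, I would transport this map back by setting $H=\Phi^{-1}\circ(H_1\times H_2)\circ\Phi$ and check that the three defining properties of a blid map survive. Fr\'echet differentiability is immediate from the Chain Rule, since $\Phi$ and $\Phi^{-1}$ are continuous linear; $H$ equals the identity on $\Phi^{-1}$ of a neighborhood of $0$, which is again a neighborhood of $0$ because $\Phi$ is a homeomorphism; and $H$ has bounded image because $\Phi^{-1}$ sends the bounded image of $H_1\times H_2$ to a bounded subset of $C^q[0,1]$. The one point deserving care — the step I would treat as the main obstacle — is exactly this last transfer: it hinges on $\Phi^{-1}$ being a \emph{bounded} linear map, i.e. on the equivalence of the two norms, which is why the norm estimates in the isomorphism step are the crux. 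Granting them, boundedness of the image in the $C^q$-norm (precisely the property the naive construction lacked) comes for free, and Lemma~\ref{lemma-blid-space} then yields the differentiable blid property.
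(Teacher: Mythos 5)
Your proposal is correct, and when unwound it reproduces the paper's own construction: your composite $H=\Phi^{-1}\circ(H_1\times H_2)\circ\Phi$, with the finite-dimensional factor truncated coordinatewise by the one-dimensional bump $h$ (your $\R^q$-bump variant is an immaterial difference), is exactly the map the paper writes down,
$$
H(x)(t)=\sum_{j=0}^{q-1}\frac{t^j}{j!}h\bigl(x^{(j)}(0)\bigr)x^{(j)}(0) + \int_0^t dt_1\int_0^{t_1}dt_2\cdots\int_0^{t_{q-1}} h\bigl(x^{(q)}(s)\bigr)x^{(q)}(s)\,ds,
$$
because your $\Phi^{-1}$ is precisely Taylor reconstruction with integral remainder. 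The difference is one of packaging. The paper simply displays this formula and asserts it is a blid map, with no verification; you instead factor the argument through two general stability facts --- invariance of the blid property under Banach-space isomorphisms and under finite products --- together with the blid maps already available on the factors $\R^q$ and $C[0,1]$. What your route buys is that the three blid axioms (Fr\'echet differentiability, local identity, bounded image) become one-line checks, and the point you correctly single out as the crux --- that $\Phi^{-1}$ is a \emph{bounded} linear map, so sup-norm boundedness of the truncated jet and of $h(x^{(q)})x^{(q)}$ yields $C^q$-boundedness of the reassembled function --- is exactly the step the paper leaves implicit. Your opening observation that the naive candidate $H(x)(t)=\phi(x(t))$ fails on $C^q[0,1]$ (its derivative $\phi'(x)x'$ is not bounded in the $C^q$-norm) is also correct, and explains why the construction must pass through the jet-plus-top-derivative decomposition in the first place.
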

\begin{proof}
The map 
$$
H(x)(t)=\sum_{j=0}^{q-1}\frac{t^j}{j!}h(x^{(j)}(0))x^{(j)}(0) + \int_0^t\,dt_1 \int_0^{t_1}\,dt_2... \int_0^{t_{q-1}} h\left( x^{(q)}(s) \right)x^{(q)}(s) \,ds ,
$$
where $h$ is a $C^{\infty}$ bump function on $\R$, represents a bounded differentiable blid map.
\end{proof}

\begin{corollary}
Any  bounded- (compact-) differentiable germ at $0\in C^q[0,1]$ has a global (in the corresponding sense) differentiable representative.
\end{corollary}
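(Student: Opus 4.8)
The plan is to obtain this corollary as an immediate consequence of the three results already established, namely Lemma~\ref{lemma-C-q}, Lemma~\ref{lemma-blid-space}, and the Main Proposition~\ref{thm-main}. First I would invoke Lemma~\ref{lemma-C-q} to produce the explicit Fr\'echet-differentiable blid map $H$ on $X=C^q[0,1]$ exhibited there. Since $X$ is a Banach space, I would then appeal to the facts recorded in Section~\ref{section-definitions}: on Banach spaces Fr\'echet and bounded differentiability coincide, and bounded differentiability implies compact (Hadamard) differentiability. Hence the very same map $H$ is simultaneously a bounded- and a compact-differentiable blid map for $X$.

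Next I would feed $H$ into Lemma~\ref{lemma-blid-space} to conclude that $C^q[0,1]$ satisfies the differentiable blid-property, in each of the two relevant senses. Finally, applying the Main Proposition~\ref{thm-main}, the composite $F=f\circ H$ furnishes a global representative of the given germ. The one point that genuinely requires attention is that this $F$ is differentiable in the \emph{same} sense as the germ $f$: if $f$ is bounded-differentiable then $F=f\circ H$ is bounded-differentiable by the Chain Rule for bounded differentiability, and if $f$ is compact-differentiable then $F$ is compact-differentiable by the Chain Rule for Hadamard differentiability, which Section~\ref{section-definitions} recalls is the weakest notion for which the Chain Rule holds. Because $H$ is differentiable in both senses, the composition stays in whichever class $f$ belongs to.

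I do not expect a substantive obstacle here; the content is essentially the bookkeeping of matching the notion of differentiability of the germ with that of the blid map and verifying that the Chain Rule is available in that notion. The only subtlety worth stating explicitly is that the construction is sense-preserving, so that ``bounded'' germs yield ``bounded'' global representatives and ``compact'' germs yield ``compact'' ones, which is exactly what the parenthetical phrasing of the statement asserts.
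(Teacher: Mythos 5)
Your proposal is correct and follows exactly the route the paper intends: Lemma~\ref{lemma-C-q} supplies the blid map, Lemma~\ref{lemma-blid-space} upgrades it to the blid-property, and the Main Proposition~\ref{thm-main} yields the global representative $F=f\circ H$, with the equivalence of Fr\'echet and bounded differentiability on Banach spaces and the Chain Rule handling both senses. Your explicit bookkeeping of which notion of differentiability is preserved under composition is a welcome elaboration of what the paper leaves implicit.
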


\section{Metric Spaces}\label{section-metric}
Let $X$ be a metric space with a metric $d(x,y)$.  Here we consider germs of maps from $X$ into an arbitrary linear  topological space $Y$.  Although instead of Fr\'echet differentiation (which is not defined for general metric spaces) we use bounded and compact (Hadamar) differentiation. The neighborhoods base $B$ can be chosen as a collection $\{B_c\}_c=\{x\in X: d(x,0)<c\}_c$. Then the space $X$ satisfies differentiable-blid property if for every $c$ there exists a differentiable, local identity map $H_c:X\to X$ such that $d(H_c(x),0)<c$ for all $x$, i.e., $H_c(X)\subset B_c$. 

In particular, if topology on $X$ is defined by countable collection of norms $||x||_{k}$, then the metric can be written as
$$
d(x,y):= \sum_{k=0}^\infty{\frac{1}{2^k}\cdot \frac{||x-y||_k}{||x-y||_k+1} }.
$$

\begin{lemma} Suppose for every $k=0,1,...$ there exists a global differentiable local identity map $H_k$ such that
$$
            \sup_x ||H_k(x)||_k< \infty.
$$
Then $X$ satisfies the differentiable blid property.
\end{lemma}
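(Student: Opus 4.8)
The plan is to use the criterion recorded just before the statement: $X$ has the differentiable blid property as soon as, for every $c>0$, one produces a single differentiable local identity $H$ whose image lies in the basic ball $B_c=\{x:d(x,0)<c\}$. So I would fix $c>0$ and try to assemble such an $H$ out of the given maps $H_k$.

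First I would separate the two regimes of the metric. Writing $d(H(x),0)=\sum_{k\ge 0}2^{-k}\,\frac{\|H(x)\|_k}{\|H(x)\|_k+1}$ and using $\frac{t}{t+1}<1$, the tail $\sum_{k>K}2^{-k}=2^{-K}$ falls below $c/2$ as soon as $K$ is chosen large, \emph{independently of $H$}. Thus the whole problem collapses to controlling the finite head: it suffices to produce one local identity $H$ with $\|H(x)\|_k<\varepsilon$ for every $k\le K$ and every $x$, where $\varepsilon:=c/4$ forces $\sum_{k\le K}2^{-k}\frac{\|H(x)\|_k}{\|H(x)\|_k+1}<\varepsilon\sum_{k\le K}2^{-k}<2\varepsilon\le c/2$ (using $t/(t+1)<t$ for $t>0$).

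To obtain such an $H$ I would imitate the Banach scaling of Lemma~\ref{lemma-blid-space}. Taking the seminorms to be increasing, $\|\cdot\|_0\le\|\cdot\|_1\le\cdots$, as is standard for such Fr\'echet metrics, the hypothesis for the top index $K$ supplies a differentiable local identity $H_K$ with $N_K:=\sup_x\|H_K(x)\|_K<\infty$. The rescaled map $G(x)=\lambda\,H_K(\lambda^{-1}x)$ is again a differentiable local identity: it equals the identity on $\lambda V_K$, where $V_K$ is the identity-neighbourhood of $H_K$, and differentiability is preserved by the Chain Rule. Moreover $\sup_x\|G(x)\|_K=\lambda N_K$, so by monotonicity a single bound on $\|\cdot\|_K$ dominates every lower seminorm, $\|G(x)\|_k\le\|G(x)\|_K\le\lambda N_K$ for all $k\le K$. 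Choosing $\lambda<\varepsilon/N_K$ yields $\|G(x)\|_k<\varepsilon$ for all $k\le K$, hence $G(X)\subset B_c$; letting $c$ range over a neighbourhood base gives the blid property.

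The step I expect to be the crux is exactly the simultaneous control of the finitely many head-seminorms by one map. Monotonicity of the seminorms is what makes it painless, since then the single map $H_K$ already controls $\|\cdot\|_0,\dots,\|\cdot\|_{K-1}$, and the ``for every $k$'' in the hypothesis is used only to guarantee that an $H_K$ exists for whatever $K$ the tail forces upon us. If one instead insists on raw, unordered seminorms, the natural attempt is to merge the individual maps into a composition $G_0\circ\cdots\circ G_K$; here the delicate point—and the real obstacle—is that such a composition is only guaranteed to inherit the image bound of its \emph{outermost} factor, so only $\|\cdot\|_0$ is controlled, while an inner factor $G_k$ may inflate a seminorm that an outer factor had already made small, because $G_k$ is the identity only near $0$ and its inputs need not lie in that neighbourhood. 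Reducing to increasing seminorms is the clean way to bypass this.
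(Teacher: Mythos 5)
Your proof is correct and takes essentially the same route as the paper: the paper also chooses $k$ large enough that the tail of the metric is below $c/2$ (its condition $k>1-\ln c/\ln 2$ is exactly $2^{-k}<c/2$), invokes the monotonicity of the norms $\|\cdot\|_j$ in $j$ to let the single map $H_k$ control all lower norms, and rescales it to $H_c(x)=\frac{c}{4N}H_k\bigl(\frac{4N}{c}x\bigr)$, which is your $G(x)=\lambda H_K(\lambda^{-1}x)$ with $\lambda=c/(4N)$. The only difference is presentational: you write out the head/tail estimate and the local-identity/Chain-Rule check that the paper leaves implicit.
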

\begin{proof} For a given $c>0$ choose any 
\begin{equation}\label{k-c-inequality}
k>1- \ln c/\ln2
\end{equation}
and let $H_k$ be such that
$$
                      ||H_k(x)||_k <N,\ x\in X.
$$
Set 
$$
             H_c(x)=\frac{c}{4N}H_k\left(\frac{4N}{c} x\right).
$$
Then inequality~\ref{k-c-inequality} and the fact that $||x||_j$ is monotonically increasing with $j$ imply that   
$$
                d(H_c(x),0)<c,
$$
i.e. $H_c(X)\in B_c$.
\end{proof}

\subsection{The Space of Smooth Functions on the Real Line}
The space $X=C^q(\R)$ ($0\leq q <\infty$) of all smooth functions on $\R$ is endowed with the collection of norms
$$
               ||x||_k=\max_{t\in[-k,k]}\max_{l\leq q}|x^{(l)}(t)|.
$$
\begin{lemma}
The space $X$ possesses the bounded- (consequently compact-)  differentiable blid property.
\end{lemma}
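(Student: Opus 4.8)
The plan is to reduce the statement to the previous Lemma of this section: it suffices to produce, for each $k=0,1,\dots$, a global bounded-differentiable map $H_k:C^q(\R)\to C^q(\R)$ that coincides with the identity on a neighborhood of $0$ and satisfies $\sup_x\|H_k(x)\|_k<\infty$. The building block is the Taylor-with-integral-remainder map already used in Lemma~\ref{lemma-C-q}: with $h$ a $C^\infty$ bump function on $\R$ equal to $1$ near $0$, set
$$
G(x)(t)=\sum_{j=0}^{q-1}\frac{t^j}{j!}h(x^{(j)}(0))x^{(j)}(0)+\int_0^t dt_1\cdots\int_0^{t_{q-1}}h\left(x^{(q)}(s)\right)x^{(q)}(s)\,ds.
$$
Exactly as in Lemma~\ref{lemma-C-q}, $G$ is bounded-differentiable, its $q$-th derivative $h(x^{(q)}(t))x^{(q)}(t)$ is uniformly bounded (the function $u\mapsto h(u)u$ has compact support), and the lower-order derivatives, being iterated integrals of this quantity plus polynomials with bounded coefficients, are bounded on every compact interval uniformly in $x$; hence $\sup_x\|G(x)\|_k<\infty$ for every $k$.

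The essential difficulty, and the only point where $C^q(\R)$ differs from $C^q[0,1]$, is that $G$ is \emph{not} a local identity on $C^q(\R)$: the equality $G(x)=x$ requires $h(x^{(q)}(s))=1$ for all $s\in\R$, i.e. $\sup_{s\in\R}|x^{(q)}(s)|$ small, and no neighborhood of $0$ in the Fr\'echet topology (which controls $x$ only on compact intervals) forces this. I would repair this with a cutoff in the variable $t$. Fix a smooth $\rho:\R\to[0,1]$ with $\rho\equiv 1$ on $[-(k+1),k+1]$ and $\operatorname{supp}\rho\subset[-(k+2),k+2]$, and set
$$
H_k(x)=x+\rho\cdot\bigl(G(x)-x\bigr).
$$

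Then I would check three things. First, global differentiability: $H_k$ is the sum of the continuous linear (hence differentiable) identity and the composition of the bounded-differentiable map $x\mapsto G(x)-x$ with the continuous linear operator $y\mapsto\rho\,y$ on $C^q(\R)$ (continuity of the latter is the Leibniz estimate $\|\rho y\|_m\le C_m\|y\|_m$), so $H_k$ is bounded-differentiable by the Chain Rule. Second, the uniform bound: on $[-k,k]$ we have $\rho\equiv 1$ together with all its derivatives, so $H_k(x)$ and $G(x)$ agree there up to order $q$, whence $\sup_x\|H_k(x)\|_k=\sup_x\|G(x)\|_k<\infty$. Third, the local identity: if $\|x\|_{k+2}<\delta$, where $\delta$ is the radius on which $h\equiv 1$, then $G(x)(t)=x(t)$ for all $t\in[-(k+2),k+2]\supset\operatorname{supp}\rho$, so $\rho\cdot(G(x)-x)=0$ and $H_k(x)=x$ on a whole neighborhood of $0$. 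With the maps $H_k$ in hand, the previous Lemma yields the differentiable blid property, and since each $H_k$ is bounded-differentiable the property holds in the bounded (hence compact) sense. The main obstacle is precisely the failure of the plain Taylor map to be a global local identity; once the $t$-cutoff is inserted, the remaining verifications are routine.
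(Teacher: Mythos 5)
Your construction is correct, but it is not the paper's argument — and the difference is substantive. The paper reuses the bare Taylor-with-remainder map (your $G$; for $q=0$, $H(x)(t)=h(x(t))x(t)$) as a single map $H$, asserts that $H$ is a differentiable local identity satisfying $\|H(x)\|_k<ae^k$, and feeds it into the reduction lemma at the start of Section~\ref{section-metric} with $H_k=H$ for every $k$. The bound is fine, but the local-identity assertion is exactly the difficulty you flagged, and it fails on $C^q(\R)$: given any ball $B_c$, pick $K$ with $2^{-K}<c$; any $x$ whose derivatives up to order $q$ vanish on $[-K,K]$ satisfies $d(x,0)\le 2^{-K}<c$ no matter how it behaves outside $[-K,K]$, and choosing such an $x$ with $x^{(q)}(s_0)$ outside $\operatorname{supp}h$ at some $s_0>K$ gives $H(x)^{(q)}(s_0)=h(x^{(q)}(s_0))\,x^{(q)}(s_0)=0\neq x^{(q)}(s_0)$, so $H(x)\neq x$. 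Hence $H$ coincides with the identity on no neighborhood of $0$ in the topology generated by the norms $\|\cdot\|_k$ (the same defect occurs for $q=0$). Your cutoff $H_k(x)=x+\rho\cdot(G(x)-x)$ repairs precisely this point: the identity requirement now sees $x$ only through its restriction to $\operatorname{supp}\rho\subset[-(k+2),k+2]$, which is controlled by the single norm $\|x\|_{k+2}$, so $\{x:\|x\|_{k+2}<\delta\}$ serves as the required neighborhood. The price — that the maps now genuinely depend on $k$ — is harmless, since the reduction lemma asks only for one map per $k$; and your three verifications (chain rule plus continuity of $y\mapsto\rho y$, the bound $\sup_x\|H_k(x)\|_k=\sup_x\|G(x)\|_k\le ae^k$, and the local identity) are each sound. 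One cosmetic remark: for $q=0$ your formula for $G$ should be read as $G(x)(t)=h(x(t))x(t)$, with the same cutoff argument. In short, your proposal does not merely take a different route; it identifies and closes a genuine gap in the paper's own proof, at the modest cost of losing the paper's (unobtainable) single $k$-independent blid map.
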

\begin{proof}
Let $h(u)$ be a $C^{\infty}$-bump function on $\R$, which equals to 1 in a neighborhood of 0 and such that $a=\sup_{u\in\R}h(u)u<\infty$. Then 
$$
                H(x)(t)=\left\{
\begin{array}{l}
h(x(t))x(t),\  q=0\\
\sum_{j=0}^{q-1}\frac{t^j}{j!}h(x^{(j)}(0))x^{(j)}(0) + \int_0^t\,dt_1 \int_0^{t_1}\,dt_2 ... \int_0^{t_{q-1}} h_a\left( x^{(q)}(s) \right)x^{(q)}(s) \,ds,\  q\geq 1
\end{array}
\right.
$$
is differentiable local identity map, and
$$
                      ||H(x)||_k< ae^k,\  k=0,1,..., \ x\in X.
$$
\end{proof}
\begin{corollary} Every bounded- (consequently compact-) differentiable germ at $0\in C(\R)$ has a global  differentiable (in the corresponding sense) representative.
\end{corollary}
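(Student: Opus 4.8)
The plan is to obtain this corollary immediately from the preceding lemma together with the Main Proposition~\ref{thm-main}; essentially all of the substantive work has already been done in constructing the blid structure on $C^q(\R)$, so I expect no real obstacle here beyond verifying that the notions of differentiability line up so that the Main Proposition applies verbatim.

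First I would invoke the preceding lemma, which produces an explicit global differentiable local identity map $H$ on $X=C^q(\R)$ (in particular on $C(\R)$, the case $q=0$) satisfying the bound $\|H(x)\|_k < ae^k$ for every $k$. Combined with the metric-space lemma stated earlier in this section, this bound lets me rescale $H$ to a map $H_c(x)=\tfrac{c}{4N}\,H\!\left(\tfrac{4N}{c}x\right)$ whose image lies inside $B_c$ for each prescribed radius $c$. This is exactly the assertion that $X$ satisfies the bounded-differentiable blid property, which the lemma records.

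Second, with the blid property established, I would apply Proposition~\ref{thm-main} directly. Given a bounded-differentiable germ $[f]$ represented on a neighborhood $U$ of $0$, I choose $c$ small enough that $B_c\subset U$ and form the composition $F=f\circ H_c$. Since $H_c$ coincides with the identity near $0$, the map $F$ represents the same germ, and since $H_c(X)\subset U$, the map $F$ is globally defined. The one point that genuinely needs checking is that $F$ is again bounded-differentiable; this rests on the Chain Rule, which, as recalled in Section~\ref{section-definitions}, holds for bounded differentiability.

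Finally, for the parenthetical compact statement I would note that bounded differentiability implies compact (Hadamard) differentiability and that the Chain Rule likewise holds in the compact sense, so the very same composition $F=f\circ H_c$ serves as a global compact-differentiable representative of any compact-differentiable germ. The only place requiring any care is thus ensuring that the differentiability sense of $f$, of $H_c$, and of their composition all agree, so that Proposition~\ref{thm-main} applies without modification; this compatibility is precisely what the Chain Rule guarantees, and it is the natural \emph{main obstacle} to confirm even though it is a routine one.
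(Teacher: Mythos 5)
Your proposal is correct and follows exactly the route the paper intends: the preceding lemma (via the metric-space rescaling lemma) gives the blid property for $C^q(\R)$, and the Main Proposition~\ref{thm-main} then yields the global representative $F=f\circ H_c$, with the Chain Rule ensuring the composition inherits bounded (resp.\ compact) differentiability. The paper leaves this chain implicit, stating the corollary without proof, and your write-up simply makes the same argument explicit.
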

\subsection{The Space of Infinitely Differentiable Functions on a Closed Interval}

The space $X=C^{\infty}[0,1]$ is endowed with the collection of norms 
$$
 ||x||_k= \max_{j\leq k}\max_{t\in[0,1]}|x^{(j)}(t)|.
$$
\begin{lemma}
The space $X$ possesses the bounded- (consequently compact-) differentiable property.
\end{lemma}
\begin{proof}
Let $h(u)$ be the same bump function on $\R$ as above. Then
$$
        H_{0}(x)(t)=h(x(t))x(t)
$$
is differentiable local identity map, and
$$
           ||H_{0}(x)||_0<a.
$$

   Further, let $k>0$. Then 
$$
H_{k}(x)(t)=\sum_{j=0}^{k-1}\frac{t^j}{j!}h(x^{(j)}(0))x^{(j)}(0) + \int_0^t\,dt_1 \int_0^{t_1}\,dt_2 ... \int_0^{t_{k-1}} h\left( x^{(k)}(s) \right)x^{(k)}(s) \,ds .
$$
is differentiable local identity map, and
$$
                   ||H_{k}(x)||_k< ae^k, \ \ k=0,1,...,\ \ x\in X.
$$
\end{proof}
\begin{corollary}
Every bounded- (consequently compact-) differentiable germ at $0\in C^{\infty}[0,1]$ has a global representative. 
\end{corollary}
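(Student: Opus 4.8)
The plan is to read off this corollary as a direct instance of the Main Proposition, with the immediately preceding lemma supplying the single nontrivial hypothesis. First I would invoke that lemma, which asserts that $C^{\infty}[0,1]$ possesses the bounded-differentiable blid property: its proof exhibits, for each $k$, a global differentiable local identity $H_k$ with $\sup_x \|H_k(x)\|_k < a e^k < \infty$, and the metric-space criterion then rescales these into maps $H_c$ whose images lie in arbitrarily small balls $B_c$. This is the substantive analytic content, and it is already in hand, so the corollary becomes a purely formal deduction.

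With the blid property established, I would apply Proposition~\ref{thm-main} directly. Concretely, given a bounded-differentiable germ $[f]$ at $0$, I would choose a local representative $f$ defined on some neighborhood $U$ of $0$, select (via the blid property) a differentiable local identity $H$ with $H(X)\subset U$, and set $F = f\circ H$. Since $H$ coincides with the identity on a neighborhood of $0$, the composite $F$ agrees with $f$ there, so $F$ represents the same germ; and since $F$ is defined on all of $X$, it is the desired global representative. The differentiability of $F$ throughout $X$ is exactly where I would lean on the Chain Rule, which holds for both bounded and compact differentiability as recalled in Section~\ref{section-definitions}.

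The ``consequently compact'' clause needs no separate argument: bounded differentiability implies compact (Hadamard) differentiability, so the blid property in the bounded sense is \emph{a fortiori} a blid property in the compact sense, and the same $F = f\circ H$ serves. I do not anticipate a genuine obstacle here, since the only demanding work — the explicit construction of the $H_k$ and the verification of the bounds $\|H_k(x)\|_k < a e^k$ — was carried out in the lemma. The sole points requiring care are formal: confirming that the ambient notion of differentiability is one for which the Chain Rule applies (so that $f \circ H$ is differentiable), and observing that the target $Y$ may be an arbitrary linear topological space, which is already permitted by the statement of Proposition~\ref{thm-main}.
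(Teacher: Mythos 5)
Your proposal matches the paper's argument exactly: the corollary is stated there without separate proof precisely because it follows by combining the preceding lemma (the blid property of $C^{\infty}[0,1]$, via the constructed maps $H_k$ and the metric-space rescaling criterion) with Proposition~\ref{thm-main}, whose proof is the same composition $F = f\circ H$ you write down. Your remarks on the Chain Rule and on bounded differentiability implying compact differentiability are consistent with Section~\ref{section-definitions} and add nothing that conflicts with the paper.
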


\subsection{The Space of Infinitely Differentiable Functions on the Real Line}

The space $X=C^{\infty}(\R)$ is endowed with the collection of norms 
$$
 ||x||_{k}= \max_{j\leq k}\max_{t\in[-k,k]}|x^{(j)}(t)|, \ k=0,1,2,...
$$
\begin{lemma}
The space $X$ possesses the bounded- (consequently compact-) differentiable property.
\end{lemma}
\begin{proof}
Let $h (u)$ be the same bump function on $\R$ as above. Then
$$
        H_{0}(x)(t)=h(x(t))x(t)
$$
is differentiable local identity map, and
$$
           ||H_{0}(x)||_0<a.
$$

   Further, let $k>0$. Then 
$$
H_{k}(x)(t)=\sum_{p=0}^{k-1}\frac{t^p}{j!}h(x^{(p)}(0))x^{(p)}(0) + \int_0^t\,dt_1 \int_0^{t_1}\,dt_2 ... \int_0^{t_{k-1}} h\left( x^{(k)}(s) \right)x^{(k)}(s) \,ds .
$$
is differentiable local identity map, and
$$
                  ||H_{k}(x)||_k< ae^k, \ \ k=0,1,...,\ \ x\in X.
$$
\end{proof}
\begin{corollary}
Every bounded- (consequently compact-) differentiable germ at $0\in C^{\infty}(\R)$ has a global representative. 
\end{corollary}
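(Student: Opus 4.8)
The plan is to reduce the claim, by way of the reduction lemma for metric spaces proved at the start of Section~\ref{section-metric}, to the construction of a suitable family $\{H_k\}$ of global differentiable local identity maps. That lemma tells us it suffices to produce, for each $k=0,1,2,\dots$, a differentiable map $H_k:X\to X$ that coincides with the identity in a neighborhood of $0$ and satisfies the uniform bound $\sup_{x}\|H_k(x)\|_k<\infty$; the differentiable blid property then follows, and a global representative of every germ follows from the Main Proposition~\ref{thm-main}.

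First I would fix a $C^\infty$ bump function $h$ on $\R$ that equals $1$ in a neighborhood of $0$ and for which $a=\sup_{u\in\R}h(u)u<\infty$, exactly as in the previous two subsections. For $k=0$ I would set $H_0(x)(t)=h(x(t))x(t)$, and for $k\ge 1$ I would define $H_k$ by the Taylor-type formula
$$
H_k(x)(t)=\sum_{j=0}^{k-1}\frac{t^j}{j!}\,h\bigl(x^{(j)}(0)\bigr)x^{(j)}(0)+\int_0^t dt_1\int_0^{t_1}dt_2\cdots\int_0^{t_{k-1}}h\bigl(x^{(k)}(s)\bigr)x^{(k)}(s)\,ds .
$$
Since the iterated integral $\int_0^t$ is meaningful for negative $t$ as well, this formula defines a smooth function on all of $\R$, so $H_k$ indeed maps $C^\infty(\R)$ into itself.

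Next I would verify the two structural properties. For the \emph{local identity} property, observe that if $x$ is close enough to $0$ in the $C^k$ sense, then $x^{(j)}(0)$ and the values $x^{(k)}(s)$ over the relevant window lie in the region where $h\equiv 1$; there $h(u)u=u$, so the formula collapses to the Taylor polynomial of $x$ at $0$ of order $k-1$ plus the $k$-fold integral of $x^{(k)}$, which by Taylor's theorem with integral remainder equals $x$ itself. Hence $H_k(x)=x$ near $0$. For \emph{differentiability}, I would note that $u\mapsto h(u)u$ is smooth on $\R$ and that composition with it, multiplication by the fixed polynomials $t^j/j!$, and the bounded linear operations of differentiation and iterated integration all preserve bounded (hence compact) differentiability; the Chain Rule, assumed throughout, is precisely what legitimizes the composition step.

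The step I expect to be the main point is the uniform estimate $\|H_k(x)\|_k<ae^k$. The key observation is that the truncated map $s\mapsto h(x^{(k)}(s))x^{(k)}(s)$ has supremum at most $a$ no matter how large $x$ grows, so the top derivative $H_k(x)^{(k)}(t)=h(x^{(k)}(t))x^{(k)}(t)$ is bounded by $a$ on all of $\R$. Differentiating the formula fewer than $k$ times leaves iterated integrals of this bounded-by-$a$ integrand together with the polynomial Taylor terms, each of whose coefficients is likewise bounded by $a$; since the seminorm $\|\cdot\|_k$ only inspects the bounded window $t\in[-k,k]$, every such term is controlled by $a\sum_{j\ge 0}k^j/j!=ae^k$. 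The feature of $C^\infty(\R)$ that makes the argument go through despite the non-compact domain is exactly that each seminorm $\|\cdot\|_k$ is local in $t$, so the otherwise unbounded behaviour of $x$ at infinity never enters the estimate. With this bound in hand, the reduction lemma and the Main Proposition finish the proof.
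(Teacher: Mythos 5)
Your proposal follows the paper's route exactly: the paper proves this corollary by constructing the very same maps $H_0(x)(t)=h(x(t))x(t)$ and the Taylor-type $H_k$ built from the bump function $h$, asserting the bound $\|H_k(x)\|_k<ae^k$, and then invoking the reduction lemma for metric spaces together with the Main Proposition~\ref{thm-main}. Your write-up merely spells out details the paper leaves implicit (the collapse to Taylor's formula with integral remainder for the local-identity property, and the term-by-term $a\sum_j k^j/j!\leq ae^k$ estimate on the window $[-k,k]$), so the construction and logic are essentially identical.
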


In conclusion, we would like to pose the following

\begin{question}\label{questionC-1} Which linear topological spaces have differentiable blid property? 
\end{question}
This question was not answered even for Banach spaces.

\section{Applications}\label{section-applications}
Local linearization and normal forms are convenient simplification of complex dynamics. In this section we discuss differentiable linearization on Banach spaces.
For a diffeomorphism $F$ with a fixed point $0$, we would like to find a smooth transformation $\Phi$ defined in a neighborhood of $0$ such that $\Phi\circ F \circ \Phi^{-1}$ has a simplified (polynomial) form called the normal form. If $\Phi\circ F \circ \Phi^{-1}=DF=\Lambda $ is linear, the conjugation is called linearization. There are two major questions in this area of research: how to increase smoothness of the conjugation $\Phi$, and whether it is sufficient to assume low smoothness of the diffeomorphism $F$.

Hartman and Grobman independently showed that if $\Lambda$ is hyperbolic, then for a diffeomorphism $F$ there exists a local homeomorphism $\Phi$ such that $\Phi \circ F \circ \Phi^{-1}=\Lambda$. Different proofs were given by Pugh \cite{P}. A higher regularity of $\Phi$ has been an active area of research.

The first attempt to answer the question of differentiability of $\Phi$ at the fixed point $0$ under hyperbolicity assumption was made in [48], but an error was found and discussed in \cite{R}. Later, in \cite{GHR}, Guysinsky, Hasselblatt and Rayskin presented correct proof. However, it was restricted to $F\in C^{\infty}$ (or more precisely, it was restricted to $F\in C^k$, where $k$ is defined by complicated expression). It was conjectured in the paper that the result is correct for $F\in C^2$, as it was announced in \cite{vS}.

Zhang, Lu and Zhang (\cite{ZLZ}, Theorem 7.1) showed that for a Banach space diffeomorphism $F$ with a hyperbolic fixed point and $\alpha$-H{\"o}lder $DF$, the local conjugating homeomorphism $\Phi$
is differentiable at the fixed point. Moreover, 
$$
\Phi(x) = x+O(||x||^{1+\beta}) \mbox{\  and \ } \Phi^{-1}(x) = x+O(||x||^{1+\beta})
$$
as $x\to 0$, for certain $\beta \in (0,\alpha]$. 

There are two additional assumptions in this theorem. The first one is the spectral band width inequality. The authors explain that this inequality is sharp if the spectrum has at most one connected component inside of the unit circle in $X$, and at most one connected component outside of the unit circle in $X$. For the precise formulation of the spectral band width condition we refer the reader to the paper \cite{ZLZ}.
It is important (and it is pointed out in \cite{ZLZ}) that this is not a non-resonance condition. The latter is required for generic linearization of higher smoothness. 

The second assumption is the assumption that the Banach space must possess smooth bump functions. 
It is conjectured in the paper that the second assumption is a necessary condition. 

In this section we explain that this conjecture is not correct (see Theorem~\ref{thm-diff}). The bump function condition can be replaced with the less restrictive blid map condition. 
Blid maps allow to reformulate Theorem 7.1 in the following way:
\begin{theorem}\label{thm-diff}
Let $X$ be a Banach space possessing a differentiable blid map with bounded derivative.
Suppose $F:X\to X$ is a diffeomorphism with a hyperbolic fixed point,  $DF$ is $\alpha$-H{\"o}lder, and the spectral band width condition is satisfied.
Then, there exists local linearizing homeomorphism $\Phi$ which 
is differentiable at the fixed point. Moreover, 
$$
\Phi(x) = x+O(||x||^{1+\beta}) \mbox{\  and \ } \Phi^{-1}(x) = x+O(||x||^{1+\beta})
$$
as $x\to 0$, for certain $\beta \in (0,\alpha]$. 
\end{theorem}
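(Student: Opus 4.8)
The plan is to follow the proof of Theorem 7.1 of \cite{ZLZ} essentially verbatim, isolating the single step in which a smooth bump function is actually used --- the globalization (cut-off) of the local diffeomorphism --- and replacing that step by composition with a blid map. Write $F(x)=\Lambda x+\phi(x)$, where $\Lambda=DF(0)$ is hyperbolic and invertible, $\phi(0)=0$, $D\phi(0)=0$, and $D\phi$ is $\alpha$-H\"older near $0$. In \cite{ZLZ} one picks a smooth bump function $\rho$ supported in a small ball, replaces $\phi$ by $\rho(\cdot/\delta)\phi$, and observes that the resulting globally defined map $\widetilde F=\Lambda+\widetilde\phi$ (i) coincides with $F$ on a neighborhood of $0$, and (ii) is a globally Lipschitz perturbation of $\Lambda$ whose Lipschitz constant tends to $0$ as $\delta\to0$. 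These two properties are the only features of the cut-off that the rest of the argument uses.

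The replacement is as follows. By Lemma~\ref{lemma-blid-space}, a differentiable blid map $H$ with $\|DH\|\le M$ yields, for every $\delta>0$, a scaled blid map $H_\delta(x)=\frac{\delta}{N}H(\frac{N}{\delta}x)$ with $H_\delta(X)\subset B_\delta$ and, by the chain rule, $DH_\delta(x)=DH(\frac{N}{\delta}x)$, so that $\|DH_\delta(x)\|\le M$; in particular the uniform derivative bound --- hence the global Lipschitz estimate $\|H_\delta(x_1)-H_\delta(x_2)\|\le M\|x_1-x_2\|$ --- is preserved under scaling. Define the globalized map
$$
\widetilde F(x)=\Lambda x+\widetilde\phi(x),\qquad \widetilde\phi(x)=\phi\bigl(H_\delta(x)\bigr).
$$
Since $H_\delta$ equals the identity on a neighborhood of $0$, we have $\widetilde F=F$ there, which is property (i). For property (ii), using $D\phi(0)=0$ and the $\alpha$-H\"older bound $\|D\phi(y)\|\le [D\phi]_\alpha\|y\|^\alpha\le[D\phi]_\alpha\delta^\alpha$ for $y\in B_\delta$, together with the Lipschitz bound on $H_\delta$, one obtains
$$
\|\widetilde\phi(x_1)-\widetilde\phi(x_2)\|\le [D\phi]_\alpha\,\delta^\alpha\,M\,\|x_1-x_2\|,
$$
so the global Lipschitz constant of $\widetilde\phi$ is $O(\delta^\alpha)$ and can be driven below any threshold required by the spectral band width condition by shrinking $\delta$. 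The uniform bound $\|\widetilde\phi(x)\|\le C\delta^{1+\alpha}$ follows in the same way from $\phi(0)=0$, $D\phi(0)=0$.

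With $\widetilde F$ in hand I would invoke the remainder of the \cite{ZLZ} argument unchanged: $\widetilde F$ is a global homeomorphism conjugate to $\Lambda$ (a small Lipschitz perturbation of a hyperbolic invertible operator), the conjugacy $\Phi$ is produced by the usual contraction on the appropriate function space, and the spectral band width condition together with the $\alpha$-H\"older regularity of $D\phi$ delivers the differentiability of $\Phi$ at $0$ and the asymptotics $\Phi(x)=x+O(\|x\|^{1+\beta})$, $\Phi^{-1}(x)=x+O(\|x\|^{1+\beta})$. The decisive point is that all of these conclusions are \emph{local} at $0$, where $\widetilde F$ coincides with $F$ and $\widetilde\phi$ with $\phi$; hence the fine local estimates of \cite{ZLZ} apply verbatim to $\widetilde F$, while the cut-off served only to set up the global contraction. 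Restricting the global $\Phi$ to the neighborhood on which $\widetilde F=F$ then yields the desired local linearizing homeomorphism for $F$.

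The main obstacle is not any single estimate but the audit that the existence of a smooth bump function is genuinely used in \cite{ZLZ} \emph{only} through properties (i) and (ii), and nowhere else --- for instance, that it is not invoked to smooth the conjugacy itself or to build auxiliary objects inside the band-width machinery. Confirming this, and checking that the composition $\phi\circ H_\delta$ (rather than the product $\rho\,\phi$) reproduces not only the global Lipschitz smallness but also whatever local $C^{1,\alpha}$ control is needed near $0$ --- which it does, precisely because $H_\delta$ is the identity there --- is where the real care lies. The boundedness of $DH$ is exactly what makes the composition globally Lipschitz with the same uniform constant after scaling, which is why that hypothesis is attached to the statement.
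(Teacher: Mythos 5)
Your proposal is correct and follows essentially the same route as the paper: both replace the bump-function cut-off in Zhang--Lu--Zhang's Theorem 7.1 by composing the nonlinearity with a scaled blid map $\phi\circ H_\delta$ (the paper writes $\tilde f(x)=f(\delta H(x/\delta))$), and both then verify that this composition satisfies the two conditions of (7.6) --- global smallness of the derivative and the $\alpha$-H\"older quotient bound --- using exactly the boundedness of $H$ and $DH$ and the fact that $H_\delta$ is the identity near $0$. The only difference is one of explicitness: the paper spells out the H\"older-quotient estimate $\|D\tilde f(x)\|/\|x\|^\alpha$ in both regimes ($\|x/\delta\|$ small and large), whereas you assert it with the ingredients ($\|D\phi(y)\|\le[D\phi]_\alpha\|y\|^\alpha$ and $\|H_\delta(x)\|\le M\|x\|$) in hand but the computation left implicit.
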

In particular, we have the following 
\begin{corollary}
Let $X=C^q[0,1]$.
Suppose $F:X\to X$ is a diffeomorphism with a hyperbolic fixed point,  $DF$ is $\alpha$-H{\"o}lder, and the spectral band width condition is satisfied.
Then, the local conjugating homeomorphism $\Phi$
is differentiable at the fixed point. Moreover, 
$$
\Phi(x) = x+O(||x||^{1+\beta}) \mbox{\  and \ } \Phi^{-1}(x) = x+O(||x||^{1+\beta})
$$
as $x\to 0$, for certain $\beta \in (0,\alpha]$. 
\end{corollary}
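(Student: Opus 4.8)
The corollary is meant to follow immediately from Theorem~\ref{thm-diff}, so my plan is simply to verify that $X=C^q[0,1]$ satisfies every hypothesis of that theorem. The space $C^q[0,1]$ with the norm $||x||_q=\max_{0\le j\le q}\sup_t|x^{(j)}(t)|$ is a Banach space, and the hyperbolicity, $\alpha$-H\"older, and spectral-band-width assumptions carry over verbatim, so the one point requiring attention is the existence of a differentiable blid map \emph{with bounded derivative}. Lemma~\ref{lemma-C-q} already provides a Fr\'echet-differentiable blid map $H$; what it does \emph{not} assert, and what I must supply, is the uniform operator bound $\sup_x||DH(x)||<\infty$.

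First I would introduce the scalar map $g(u)=h(u)u$ built from the bump function $h$ and record that, since $h$ is smooth with bounded support, its derivative $g'(u)=h(u)+h'(u)u$ is bounded on $\R$, say $|g'|\le M$. Differentiating the explicit formula for $H$ through the finite sum and the iterated integral then gives, for a direction $v\in C^q[0,1]$,
$$
(DH(x)v)(t)=\sum_{j=0}^{q-1}\frac{t^j}{j!}\,g'\!\left(x^{(j)}(0)\right)v^{(j)}(0)+\int_0^t\,dt_1\cdots\int_0^{t_{q-1}} g'\!\left(x^{(q)}(s)\right)v^{(q)}(s)\,ds.
$$
The core estimate is to bound $||DH(x)v||_q$ by a constant multiple of $||v||_q$, uniformly in $x$. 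I would carry this out level by level: differentiating the polynomial part $l$ times leaves a polynomial whose coefficients are products $g'(x^{(j)}(0))v^{(j)}(0)$, each dominated by $M\,||v||_q$, while the factorial factors summed over $[0,1]$ contribute only a constant; differentiating the $q$-fold integral $l$ times produces a $(q-l)$-fold integral of $g'(x^{(q)}(s))v^{(q)}(s)$, again bounded by $M\,||v||_q$ after integrating over the unit interval (for $l=q$ it is exactly the integrand). Compactness of $[0,1]$ is what renders all the polynomial and integral factors bounded, and the bound $M$ on $g'$ is precisely what makes the estimate independent of the base point $x$. Assembling these bounds yields $||DH(x)v||_q\le C_q M\,||v||_q$ with $C_q$ depending only on $q$, hence $\sup_x||DH(x)||\le C_q M<\infty$.

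With this bounded-derivative blid map in hand, Theorem~\ref{thm-diff} applies directly and delivers the differentiability of $\Phi$ at the fixed point together with the stated $O(||x||^{1+\beta})$ expansions for $\Phi$ and $\Phi^{-1}$. I expect the only genuine obstacle to be the operator-norm estimate on the highest-order term: one must confirm that differentiating the iterated integral $q$ times does not reintroduce dependence on derivatives of $v$ of order exceeding $q$. This is exactly the point where the $q$-fold integration compensates the appearance of $v^{(q)}$ in the integrand, leaving a bound in terms of $||v||_q$ alone, so the verification reduces to the routine but essential bookkeeping sketched above.
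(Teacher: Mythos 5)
Your proposal is correct and follows exactly the route the paper intends: the corollary is stated as an immediate specialization of Theorem~\ref{thm-diff}, with the blid map for $C^q[0,1]$ supplied by Lemma~\ref{lemma-C-q}. In fact you go one step further than the paper, which never explicitly verifies the hypothesis that the blid map has \emph{bounded derivative} (Lemma~\ref{lemma-C-q} asserts only Fr\'echet differentiability); your estimate $\sup_x||DH(x)||\le C_q M<\infty$, obtained by differentiating the polynomial part and the iterated integral term by term and using the uniform bound on $g'(u)=h(u)+h'(u)u$, fills in precisely the detail the paper leaves implicit.
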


Below we justify Theorem~\ref{thm-diff}
\begin{proof}
Zhang, Lu and Zhang showed that for the conclusion of their Theorem 7.1 it is enough to satisfy the inequalities 1 and 2 (see  \ref{ineq} below),  which are called condition (7.6) in their paper.

In order to apply the blid maps instead of bump functions to the inequalities \ref{ineq}, it is sufficient to construct a bounded blid map, which has only first-order bounded derivative. I.e., let blid map $H(x): X\to X$ be as follows:
\begin{equation}
\begin{array}{l}
\mbox{1. \ }  H(x) = x \mbox{\ for\ } ||x||<1\\
\mbox{2. \ } H\in C^1 \mbox{\ and\ } ||H^{(j)}(x)||\leq c_j,  \ j=0,1 .
\end{array}
\end{equation} 

The condition (7.6) of \cite{ZLZ} is:
\begin{equation}\label{ineq}
\begin{array}{l}
\mbox{1. \ } \sup_{x\in X}||DF(x) -\Lambda|| \leq \delta_{\eta}\\
\mbox{2. \ } \sup_{x\in V\setminus O}\left\{||DF(x) - \Lambda|| / ||x||^{\alpha}\right\} = M < \infty 
\end{array}
\end{equation} 

Let $DF -\Lambda = f$. Define
$$
\tilde{f}(x):= f\left( \delta H(x/ \delta) \right)
$$

We will show that if $f$ satisfies (7.6), then so does $\tilde{f}$.
$$
\sup_{x\in X}|| D\tilde{f}(x)|| \leq \sup_{x\in X} || D f(x) || \cdot \sup_{x\in X} || DH(x) || \leq \delta_{\eta} \cdot c_1.
$$
Thus, the first inequality of (7.6) holds for $\tilde{f}$. For the second inequality we have the following estimate:
$$
\frac{|| D \tilde{f}(x)||}{||x||^{\alpha}} \leq \frac{|| Df \left(\delta H(x/\delta)\right) ||}{|| \delta H(x/\delta) ||^{\alpha}} \cdot \left( \frac{|| \delta H(x/\delta) ||}{||x||}  \right)^{\alpha}.
$$
The second multiple is bounded, because for small $x$ (say, $||x/\delta||<\epsilon$ for some $\epsilon>0$) we have
$$
\frac{|| \delta H(x/\delta) ||}{||x||} < c_1 + o(1),
$$
while for $||x/\delta|| \geq \epsilon$
$$
\frac{|| \delta H(x/\delta) ||}{||x||} < c_0/\epsilon.
$$
I.e., $\frac{|| \delta H(x/\delta) ||}{||x||}$ is less than some constant $m$.
Then, 
$$
\sup_{x\in V\setminus O}\frac{|| D \tilde{f}(x)||}{||x||^{\alpha}} \leq
\sup_{0<||x||<\delta c_0}\left\{||D f(x)|| / ||x||^{\alpha}\right\} \cdot \sup_{x\in X}|| D H(x)|| \cdot m^{\alpha}
$$
$$=\sup_{0<||x||<\delta c_0}\left\{||D f(x)|| / ||x||^{\alpha}\right\}c_1\cdot m^{\alpha}.
$$

This quantity is bounded by $M c_1 m^{\alpha}$ if $\delta$ is sufficiently small.
\end{proof}

Other applications in the area of local analysis on Banach spaces possessing blid maps can be found in \cite{BR}.

A generalization of Theorem~\ref{thm-diff} might be possible for the case of linear topological spaces (e.g., space of smooth functions), which posses differentiable blid property. 


\end{document}